\DeclareFontFamily{OT1}{rsfs}{}
\DeclareFontShape{OT1}{rsfs}{n}{it}{<-> rsfs10}{}
\DeclareMathAlphabet{\mathscr}{OT1}{rsfs}{n}{it}
\DeclareMathOperator{\rank}{rank}
\renewcommand{\boxed}[1]{\text{\fboxsep=.2em\fbox{\m@th$\displaystyle#1$}}}
\newcommand{\con}{\textrm{-}}
\newcommand{\mcA}{\mathcal{A}}
\newcommand{\mcB}{\mathcal{B}}
\newcommand{\mbF}{\mathbb{F}}
\newcommand{\mbZ}{\mathbb{Z}}
\theoremstyle{plain}
\newtheorem{Theorem}{Theorem}[section]
\newtheorem{Proposition}[Theorem]{Proposition}
\newtheorem{Lemma}[Theorem]{Lemma}
\newtheorem{Corollary}[Theorem]{Corollary}
\newtheorem{Example}[Theorem]{Example}
\theoremstyle{definition}
\newtheorem{Definition}[Theorem]{Definition}
\theoremstyle{remark}
\renewcommand{\email}[2][]{%
  \ifx\emails\@empty\relax\else{\g@addto@macro\emails{,\space}}\fi%
  \@ifnotempty{#1}{\g@addto@macro\emails{\textrm{(#1)}\space}}%
  \g@addto@macro\emails{#2}%
}
\title{The title}%
\author{Nicholas Guo}
\email[N. Guo]{nicholas.guo.bowling@gmail.com}
\address[N. Guo]{Massachusetts Institute of Technology\\ Cambridge, MA, USA}
\author{Guangyi Yue}
\address[G. Yue]{Department of Mathematics\\ Massachusetts Institute of Technology\\ Cambridge, MA, 02139, USA}
\email[G. Yue]{gyyue@mit.edu}
\begin{document}

\setcounter{tocdepth}{4}

\title[Counting Independent Sets in Graphs of Hyperplane Arrangements]{Counting Independent Sets in Graphs of Hyperplane Arrangements}

\maketitle
\begin{abstract}
In this paper, we count the number of independent sets of a type of graph $G(\mathcal{A},q)$ associated to some hyperplane arrangement $\mathcal{A}$, which is a generalization of the construction of graphical arrangements. We show that when the parameters of $\mathcal{A}$ satisfy certain conditions, the number of independent sets of the disjoint union $G(\mcA,q_1)\cup\cdots\cup G(\mcA,q_s)$ depends only on the coefficients of $\mcA$ and the total number of vertices $\sum_i q_i$ when $q_i$'s are powers of large enough prime numbers. In addition it is independent of the coefficients as long as $\mcA$ is central and the coefficients are multiplicatively independent.
\end{abstract}

\section{Introduction}
A (real) hyperplane arrangement is a finite collection $\mathcal{A}$ of affine hyperplanes in $\mathbb{R}^n$ and the well-known Braid arrangement $\mathcal{B}_n$ consists of hyperplanes $x_i=x_j$ for $1\leq i<j\leq n$. Any sub-arrangement $\mcA$ of $\mathcal{B}_n$ is called a graphical arrangement because it can be naturally associated to a graph $G$ on $[n]$ where $ij\in E(G)$ iff $x_i=x_j\in\mcA$. The invariants of a graphical arrangement and its corresponding graph are closely related. For instance, the characteristic polynomial of a graphical arrangement $\mcA$ is the same as the chromatic polynomial of the corresponding $G$, see \cite{stanley1}. Whitney gave a classical formula in \cite{whitney1932logical} to calculate this polynomial:
$$\chi_\mcA(t)=\sum_{S\subseteq E(G)}(-1)^{\#S}q^{c(S)}$$
where $c(S)$ is the number of connected components of the spanning subgraph of $G$ with edge set $S$. Postnikov and Stanley generalized this result to arbitrary deformations of the Braid arrangement in \cite{postnikov2000deformations}. 

In this paper, rather than building a graph whose number of vertices is the dimension $n$ of the ambient space, we fix the number of vertices $q$ as an arbitrary power of a large enough prime number, but make the dimension $n$ to appear as $n$-independent sets of the corresponding graph. The construction is as follows: given parameters $A=\{(a_1,b_1),\ldots,(a_m,b_m)\}\subset\mbZ\times\mbZ$, let $$\mcA_n=\mcA_n(A)\coloneqq\mathcal{B}_n\cup\left\{x_i=a_kx_j+b_k\mid1\leq i\neq j\leq n, 1\leq k\leq m\right\}.$$
We define the associated graph $G(\mcA_n,q)$ to have vertex set $[q]$ and edges $ij\in E(G(\mcA_n,q))$ iff $i\equiv a_k j+b_k(\text{mod }q)$ for some $k$. We show in the paper that the number of $n$-element independent sets of $G(\mcA_n,q)$ is given by $\chi_{\mcA_n}(q)/n!$ when $q$ is power of a large enough prime number. Also we study in detail the following three cases:
\begin{enumerate}
\item $A_0=\{(1,1)\}$;
\item $A_1=\{(0,0),(a_1,0),\ldots,(a_m,0)\}$;
\item $A_2=\{(1,b_1),\ldots,(1,b_m)\}$.
\end{enumerate}
The first case $\mcA_n(A_0)$ is the Catalan arrangement with the corresponding graph being a cycle. The main result of the paper shows that when $A=A_1$ and $a_i$'s are multiplicatively independent, the number of $n$-element independent sets in the disjoint union $G(\mcA_n(A),q_1)\cup\cdots \cup G(\mcA_n(A),q_s)$ only depends on $m$, $n$ and $\sum_i(q_i-1)$ when $q_i$'s are powers of large enough prime numbers. In case of $A=A_2$, the number of  $n$-element independent sets in this disjoint union depends only on $n$, $b_i$'s and $\sum_iq_i$, for all $q_i$ large enough (not necessarily prime powers).

The rest of the paper is organized as follows. Section 2 is an overview of the basic properties of hyperplane arrangements and an application of the finite field method which relates characteristic polynomials of $\mathcal{A}_n(A)$ and independent sets of the corresponding graph $G(\mcA_n(A),q)$. Section 3 deals with the characteristic polynomial of $\mcA_n(A_1)$, and its properties when the parameters $a_i$'s are multiplicatively independent. Section 4 proves the main theorems regarding independent sets in the disjoint union of such graphs $G(\mcA_n(A),q)$ under different assumptions on $A$. And finally in Section 5, we go back to the characteristic polynomial of $\mcA_n(A_1)$ by relating it with the extended Catalan arrangements, and several special cases are explicitly computed.

\section{Preliminaries}



In this section, we give a brief overview of the basic properties of hyperplane arrangements following \cite{stanley1}.  A hyperplane arrangement is a finite collection $\mathcal{A}$ of affine hyperplanes in $\mathbb{R}^n$ and its intersection poset $L(\mathcal{A})$ consists of all intersections of subsets of the hyperplanes in $\mathcal{A}$ ordered by reverse inclusion with a unique minimal element $\hat{0}=\mathbb{R}^n$. We call $\mathcal{A}$ central if $\cap_{H\in\mathcal{A}}H\ne \emptyset$. Denote by $\textnormal{rank}(\mathcal{A})$ the rank of $\mathcal{A}$ which is the dimension of the space spanned by all the normals to the hyperplanes in $\mathcal{A}$.  The characteristic polynomial $\chi_{\mathcal{A}}$ is defined as $\chi_{\mathcal{A}}(t)= \sum_{x\in L(\mathcal{A})} \mu(\hat{0},x)t^{\dim(x)},$ where $\mu(\hat{0},x)$ is the M\"{o}bius function of $L(\mathcal{A})$. The characteristic polynomial can also be obtained by summing over all central subsets of $\mathcal{A}$ as follows.

\begin{Theorem}[\cite{orlik2013arrangements}]\label{Whitney}
Let $\mathcal{A}$ be an arrangement in $\mathbb{R}^n$. Then, $$\chi_{\mathcal{A}_n}(t)=\sum_{\substack{\mathcal{B}\subseteq\mathcal{A}\\\mathcal{B}\; \textnormal{central}}}(-1)^{\#\mathcal{B}}t^{n-\textnormal{rank}(\mathcal{B})}.$$
\end{Theorem}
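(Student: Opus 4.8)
The plan is to expand the right-hand side and match it term-by-term against the definition $\chi_{\mathcal{A}}(t)=\sum_{x\in L(\mathcal{A})}\mu(\hat{0},x)t^{\dim(x)}$ using the combinatorics of the intersection poset. First I would observe that a subarrangement $\mathcal{B}\subseteq\mathcal{A}$ is central precisely when $x_{\mathcal{B}}:=\bigcap_{H\in\mathcal{B}}H$ is nonempty, in which case $x_{\mathcal{B}}\in L(\mathcal{A})$ and $\dim(x_{\mathcal{B}})=n-\rank(\mathcal{B})$, since the normals of the hyperplanes in $\mathcal{B}$ span the annihilator of the direction space of $x_{\mathcal{B}}$. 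Grouping the central subarrangements according to the element $x\in L(\mathcal{A})$ they cut out turns the right-hand side into
\[
\sum_{\substack{\mathcal{B}\subseteq\mathcal{A}\\ \mathcal{B}\ \text{central}}}(-1)^{\#\mathcal{B}}t^{n-\rank(\mathcal{B})}=\sum_{x\in L(\mathcal{A})}\Bigl(\sum_{\substack{\mathcal{B}\subseteq\mathcal{A}\\ \bigcap\mathcal{B}=x}}(-1)^{\#\mathcal{B}}\Bigr)t^{\dim(x)},
\]
so the whole statement reduces to proving that the inner coefficient $F(x):=\sum_{\bigcap\mathcal{B}=x}(-1)^{\#\mathcal{B}}$ equals $\mu(\hat{0},x)$ for every $x\in L(\mathcal{A})$.

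To identify $F$ with the M\"obius function I would verify that $F$ satisfies the recursion characterizing $\mu(\hat{0},\cdot)$, namely $\sum_{\hat{0}\le y\le x}F(y)=\delta_{\hat{0},x}$. For a fixed $x$, let $\mathcal{A}_x=\{H\in\mathcal{A}:x\subseteq H\}$ be the set of hyperplanes through $x$; then $\bigcap\mathcal{B}\supseteq x$ holds exactly when $\mathcal{B}\subseteq\mathcal{A}_x$, and for such $\mathcal{B}$ the intersection $\bigcap\mathcal{B}$ is some $y\in L(\mathcal{A})$ with $\hat{0}\le y\le x$ (with $y=\hat{0}=\mathbb{R}^n$ for $\mathcal{B}=\varnothing$). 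Conversely, the closure identity $\bigcap\mathcal{A}_y=y$ shows every such $y$ is attained. Partitioning the $2^{\#\mathcal{A}_x}$ subsets of $\mathcal{A}_x$ by the element they intersect to therefore gives
\[
\sum_{\hat{0}\le y\le x}F(y)=\sum_{\mathcal{B}\subseteq\mathcal{A}_x}(-1)^{\#\mathcal{B}}=\begin{cases}1,&\mathcal{A}_x=\varnothing,\\[2pt] 0,&\mathcal{A}_x\ne\varnothing,\end{cases}
\]
the last equality being $(1-1)^{\#\mathcal{A}_x}$. Since $\mathcal{A}_x=\varnothing$ if and only if $x=\hat{0}$, the right-hand side is $\delta_{\hat{0},x}$, which is exactly the recursion for $\mu(\hat{0},\cdot)$; by uniqueness of its solution (an induction up the poset from $\hat{0}$) we get $F(x)=\mu(\hat{0},x)$, and substituting back into the displayed regrouping yields $\chi_{\mathcal{A}}(t)$.

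The argument is essentially bookkeeping, and the one place demanding care is the interplay between the empty subarrangement and the orientation of $L(\mathcal{A})$: one must check that $\varnothing$ counts as central, that $\bigcap\varnothing=\mathbb{R}^n=\hat{0}$ genuinely sits at the bottom of the interval $[\hat{0},x]$, and — crucially — that the closure identity $\bigcap\mathcal{A}_y=y$ guarantees \emph{every} $y\in[\hat{0},x]$ actually occurs as $\bigcap\mathcal{B}$ for some $\mathcal{B}\subseteq\mathcal{A}_x$; without this last point the partition step would only give an inequality rather than the identity needed to invoke uniqueness of the M\"obius function. Once these points are pinned down, the proof is the short two-step pattern: regroup the central-subarrangement sum over $L(\mathcal{A})$, then recognize the resulting coefficient as a M\"obius number.
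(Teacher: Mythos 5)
Your proof is correct: the regrouping of central subarrangements by their intersection $x\in L(\mathcal{A})$ and the verification that $F(x)=\sum_{\bigcap\mathcal{B}=x}(-1)^{\#\mathcal{B}}$ satisfies the defining recursion $\sum_{\hat{0}\le y\le x}F(y)=\delta_{\hat{0},x}$ via $(1-1)^{\#\mathcal{A}_x}$ is exactly the standard argument; the paper itself offers no proof and simply cites Stanley's notes, where this same M\"obius-function computation is used. One minor remark: the surjectivity of $\mathcal{B}\mapsto\bigcap\mathcal{B}$ onto $[\hat{0},x]$ that you single out as crucial is in fact automatic from the definition of $L(\mathcal{A})$ (and even without it the partition identity would still hold, since $F(y)$ would simply vanish on unattained $y$), so nothing in the bookkeeping is at risk.
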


We call an arrangement $\mcA$ essential if $\rank(\mcA)=n$. Denote by $r(\mathcal{A})$ the number of connected components of $\mathbb{R}^n\setminus\cup_{H\in\mathcal{A}}H$ and by $b(\mathcal{A})$ the number of relatively bounded regions \cite{stanley1}, which exactly means bounded when $\mcA$ is essential. We have the following theorem to calculate $r(\mcA)$ and $b(\mcA)$.

\begin{Theorem}[\cite{zaslavsky}]\label{Zaslavsky}
Given a hyperplane arrangement $\mathcal{A}$ in $\mathbb{R}^n$, we have 
\begin{align*}
r(\mathcal{A})=& (-1)^n\chi_{\mathcal{A}}(-1)\\
b(\mathcal{A})= & (-1)^{\textnormal{rank}(\mathcal{A})}\chi_{\mathcal{A}}(1).
\end{align*} 
\end{Theorem}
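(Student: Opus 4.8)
The plan is to prove Zaslavsky's Theorem by the classical deletion--restriction induction, feeding Whitney's Theorem into the characteristic polynomial. The first step is to reduce to the \emph{essential} case. The essentialization of $\mathcal{A}$ --- the image of $\mathcal{A}$ in $\mathbb{R}^n$ modulo its lineality space $\bigcap_{H\in\mathcal{A}}H$, viewed as an essential arrangement in $\mathbb{R}^{\rank(\mathcal{A})}$ --- has the same number of regions, the same number of relatively bounded regions, and characteristic polynomial differing from $\chi_{\mathcal{A}}(t)$ only by the factor $t^{\,n-\rank(\mathcal{A})}$. Since this factor evaluates to $(-1)^{n-\rank(\mathcal{A})}$ at $t=-1$ and to $1$ at $t=1$, both displayed identities for $\mathcal{A}$ follow from the corresponding identities for an essential arrangement. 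So assume $\rank(\mathcal{A})=n$; then ``relatively bounded'' means bounded, and the claims become $r(\mathcal{A})=(-1)^n\chi_{\mathcal{A}}(-1)$ and $b(\mathcal{A})=(-1)^n\chi_{\mathcal{A}}(1)$.

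Now induct on $\#\mathcal{A}$, noting $\#\mathcal{A}\ge n$ since $\rank(\mathcal{A})=n$. In the base case $\#\mathcal{A}=n$ the normals are independent, every subarrangement is central, and Whitney's Theorem gives $\chi_{\mathcal{A}}(t)=\sum_{k=0}^n\binom{n}{k}(-1)^k t^{n-k}=(t-1)^n$; geometrically $n$ independent hyperplanes cut $\mathbb{R}^n$ into $2^n$ unbounded cone-like regions, so $r(\mathcal{A})=2^n=(-1)^n(-2)^n$ and $b(\mathcal{A})=0=(-1)^n\cdot 0$. For the inductive step with $\#\mathcal{A}>n$, a short matroid argument on the normals (not every element of a rank-$n$ matroid on more than $n$ elements is a coloop) lets us choose $H_0\in\mathcal{A}$ whose deletion $\mathcal{A}'=\mathcal{A}\setminus\{H_0\}$ is still essential of rank $n$; and the restriction $\mathcal{A}''=\mathcal{A}^{H_0}=\{H_0\cap H:H\in\mathcal{A}',\ H_0\cap H\ne\emptyset\}$, an arrangement inside $H_0\cong\mathbb{R}^{n-1}$, is essential of rank $n-1$ because restriction corresponds to matroid contraction. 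Thus the inductive hypothesis applies to both $\mathcal{A}'$ and $\mathcal{A}''$.

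The core of the proof consists of three compatible recurrences: $\chi_{\mathcal{A}}(t)=\chi_{\mathcal{A}'}(t)-\chi_{\mathcal{A}''}(t)$, $\,r(\mathcal{A})=r(\mathcal{A}')+r(\mathcal{A}'')$, and $b(\mathcal{A})=b(\mathcal{A}')+b(\mathcal{A}'')$. For the first I would begin from Whitney's Theorem and split the sum over central $\mathcal{B}\subseteq\mathcal{A}$ by whether $H_0\in\mathcal{B}$: the terms with $H_0\notin\mathcal{B}$ reassemble into $\chi_{\mathcal{A}'}(t)$, while the terms with $H_0\in\mathcal{B}$ must be matched --- with a uniform rank drop of $1$ --- against the central subsets of $\mathcal{A}''$, which is really the standard intersection-poset/M\"{o}bius identity $\chi_{\mathcal{A}}=\chi_{\mathcal{A}'}-\chi_{\mathcal{A}^{H_0}}$ (care is needed because distinct members of $\mathcal{A}'$ meeting $H_0$ in the same flat become a single hyperplane of $\mathcal{A}''$). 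For the region recurrences I would argue geometrically: passing from $\mathcal{A}'$ to $\mathcal{A}$ subdivides exactly those regions of $\mathcal{A}'$ whose interior $H_0$ meets, the trace of $\mathcal{A}'$ on $H_0$ is precisely $\mathcal{A}''$, and each region of $\mathcal{A}''$ lies in the interior of exactly one subdivided region --- giving $r(\mathcal{A})=r(\mathcal{A}')+r(\mathcal{A}'')$ --- while a case analysis on which of the two halves of a subdivided region (and which unsubdivided region) is bounded matches the net change to the bounded regions of $\mathcal{A}''$. Combining the three recurrences with the inductive hypotheses and $\rank(\mathcal{A}'')=n-1$: multiplying the $\chi$-recurrence at $t=-1$ by $(-1)^n$ gives $(-1)^n\chi_{\mathcal{A}}(-1)=(-1)^n\chi_{\mathcal{A}'}(-1)+(-1)^{n-1}\chi_{\mathcal{A}''}(-1)=r(\mathcal{A}')+r(\mathcal{A}'')=r(\mathcal{A})$, and the identical manipulation at $t=1$ yields the formula for $b(\mathcal{A})$.

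I expect the main obstacle to be the geometric bookkeeping in the bounded-region recurrence: one must verify that subdividing a region of $\mathcal{A}'$ by $H_0$ produces bounded pieces precisely in bijection with the bounded regions of $\mathcal{A}''$, which breaks into the cases of a bounded region cut, an unbounded region cut along a bounded trace, and an unbounded region cut along an unbounded trace, and this is exactly where essentiality (so that ``bounded'' is the right notion throughout) is genuinely used. A lesser but real subtlety is the identification of coincident hyperplanes in the restriction $\mathcal{A}''$, which is what keeps the Whitney-based derivation of the $\chi$-recurrence from being a purely formal manipulation.
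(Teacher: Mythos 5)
The paper does not prove this statement at all: Zaslavsky's theorem is quoted as Theorem 1.2 with a citation to \cite{zaslavsky}, so there is no ``paper proof'' to match yours against. Your proposal is the standard deletion--restriction proof (essentially the one in Stanley's lecture notes), and its outline is sound: the reduction to the essential case, the base case $\chi_{\mathcal{A}}(t)=(t-1)^n$ with $r=2^n$, $b=0$, the choice of a non-coloop $H_0$, and the three compatible recurrences combine exactly as you say. Two points deserve care. First, your description of the essentialization quotients by ``the lineality space $\bigcap_{H\in\mathcal{A}}H$,'' which is empty for a non-central arrangement; the correct space is the intersection of the \emph{directions} of the hyperplanes, equivalently the orthogonal complement of the span of the normals --- the facts you then use (same $r$, same count of relatively bounded regions, $\chi$ off by $t^{\,n-\rank(\mathcal{A})}$) are all correct for that construction. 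Second, the recurrence $b(\mathcal{A})=b(\mathcal{A}')+b(\mathcal{A}'')$ is genuinely conditional on $\rank(\mathcal{A}')=\rank(\mathcal{A})$ (it fails when deleting a coloop), and your non-coloop choice of $H_0$ is precisely what secures it; you correctly identify the case analysis behind this recurrence as the technical heart, though you only sketch it. Finally, since $\mathcal{A}''$ lives in $\mathbb{R}^{n-1}$, the induction should formally be on $\#\mathcal{A}$ ranging over arrangements in all ambient dimensions (or on $n+\#\mathcal{A}$); as written this is a harmless bookkeeping point. With those details filled in, the argument is a complete and correct proof.
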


In this paper, we are dealing with arrangements $\mcA$ defined over rational numbers, and by multiplying each hyperplane equation by a suitable integer, we may assume $\mcA$ is defined over integers. Then we can take the coefficients modulo a prime number $p$ and get an arrangement $\mcA(q)$ defined over the finite filed $\mbF_q$, where $q = p^r$ for some $r$. The following finite field method for calculating the characteristic polynomial for such arrangements is crucial.

\begin{Theorem}[\cite{garsia2002saga}] 
\label{finite field}
Let $\mathcal{A}$ be an arrangement defined over integers and $q=p^r$ for $p$ being a large enough prime number. Then 
$$\chi_{\mathcal{A}}(q)=q^n-\# \cup_{H\in\mathcal{A}(q)}H.$$
\end{Theorem}


For any simple graph $G$ with vertices $[n]$ and edges $E(G)$, the associated \textit{graphical arrangement} $\mcA_G$ is defined to be: $x_i-x_j=0$ for $ij\in E(G)$. Every graphical arrangement is a sub-arrangement of the \textit{Braid arrangement}:
$$\mathcal{B}_n=\left\{x_i=x_j\mid1\leq i<j\leq n\right\}$$
whose graph is the complete graph with $n$ vertices. 

Given a set of parameters $A=\{(a_1,b_1),\ldots,(a_m,b_m)\}\subset\mbZ\times\mbZ$, we generalize this construction to the following type of arrangements:
$$\mathcal{A}_n=\mcA_n(A)\coloneqq\mathcal{B}_n\cup\left\{x_i=a_kx_j+b_k\mid1\leq i\neq j\leq n, 1\leq k\leq m\right\}.$$
We define the associated graph $G(\mcA_n,q)$ to have vertices $[q]$ and the edges $ij\in E(G(\mcA_n,q))$ iff $i\equiv a_k j+b_k(\text{mod }q)$ for some $k$.

The following proposition is an immediate result of Theorem \ref{finite field}, but is important in relating invariants of hyperplane arrangements to the corresponding graph.
\begin{Proposition}\label{24}
Let $\mcA_n=\mcA_n(A)$ and $G(\mcA_n,q)$ be defined as above for $q=p^r$ where $p$ is a large enough prime number. Then the number of $n$-element independent set of $G(\mcA_n,q)$ is $\chi_{\mathcal{A}_n}(q)/n!$.
\end{Proposition}
\begin{proof}
By construction, any ordered $n$-element independent set $(i_1,\ldots,i_n)\subset[q]^n$ in $G(\mcA_n,q)$ can be identified with a point in $\mbF_q^n$ which is outside the union of all hyperplanes in $\mcA_n(q)$. Regardless of the order, we know by Theorem \ref{finite field} that the number of $n$-element independent set of $G(\mcA_n,q)$ is $$\frac{q^n-\# \cup_{H\in\mathcal{A}_n(q)}H}{n!}=\frac{\chi_{\mathcal{A}_n}(q)}{n!}.$$
\end{proof}


We consider several special cases of the parameters $A$:
\begin{enumerate}
\item $A_0=\{(1,1)\}$. Then $\mcA_n(A_0)$ is exactly the Catalan arrangement:
$$\mathcal{C}_n=\{x_i-x_j=-1,0,1\mid 1\leq i<j\leq n\},$$
and it's well-known that 
$$\chi_{\mathcal{C}_n}(t)=t(t-n-1)_{n-1}$$
where $(k)_l=k(k-1)\cdots(k-l+1)$.
\item $A_1=\{(0,0),(a_1,0),\ldots,(a_m,0)\}$, then
$$\mcA_n(A_1)=\mathcal{B}_n\cup\{x_i=0\mid1\leq i\leq n\}\cup\{x_i=a_kx_j\mid 1\leq i\neq j\leq n,1\leq k\leq m\}$$
is a central arrangement. Since there are hyperplanes $x_i=0$, the vertex $q\in[q]$ is connected to all the other vertices in $G(\mcA_n(A_1),q)$. So when considering the independence sets, it suffices to look at a reduced graph $\tilde{G}(\mcA_n(A_1),q-1)$ with vertex set $[q-1]$ and edges $ij$ iff $i\equiv a_k j(\text{mod }q)$ for some $k$, which comes from $G(\mcA_n(A_1),q)$ by removing the vertex $q$ and its incident edges.
\item $A_2=\{(1,b_1),\ldots,(1,b_m)\}$, then
$$\mcA_n(A_2)=\mathcal{B}_n\cup\{x_i=x_j+b_k\mid 1\leq i\neq j\leq n,1\leq k\leq m\},$$
which is a deformation of the braid arrangement. In particular, $G(\mcA_n(A_2),q)$ has vertex set $[q]$ and edges $ij$ iff $i-j\equiv b_k$ for some $k$.
\end{enumerate}

Finally, we introduce the notion of exponential sequences of arrangements (ESA) following Section 5.3 \cite{stanley1}.
\begin{Definition}
A sequence of hyperplane arrangements $\mathfrak{A}= (\mcA_1, \mcA_2, . . . )$ is called an exponential sequence of arrangements if it satisfies the following:
\begin{enumerate}
\item $\mcA_n$ is in $\mathbb{R}^n$.
\item Every $H \in\mcA_n$ is parallel to some hyperplane in the braid arrangement $\mathcal{B}_n$. 
\item Let $S\subset[n]$ and 
$$\mcA_n^S=\{H\in\mcA_n\mid H\text{ is parallel to }x_i-x_j=0\text{ for some }i,j\in S\}. $$
Then $L(\mcA^S_n) \cong L(\mcA_{|S|})$.
\end{enumerate}
\end{Definition}
\begin{Theorem}[{\cite[Theorem 5.17]{stanley1}}]\label{esa}
$$\sum^{\infty}_{n=0}\chi_{\mathcal{A}_n}(t)\frac{x^n}{n!}=\left(\sum^{\infty}_{n=0}(-1)^nr(\mathcal{A}_n)\frac{x^n}{n!}\right)^{-t}=\exp\left(\sum_{n=1}^\infty \tilde{\chi}_{\mcA_n}(t)\frac{x^n}{n!}\right)$$
where $\tilde{\chi}_{\mcA_n}(t)=c_nt$ for some $c_n$.
\end{Theorem}
Note that under our construction, $(\mcA_n(A_0))_n$ and $(\mcA_n(A_2))_n$ are ESAs and we will use Theorem \ref{esa} to study their properties in Section 4.

\section{Generating Functions of the Characteristic Polynomials}

In this section, denote $\mathbb{N}$ as the set of natural numbers and $\mathbb{Q}$ as rational numbers. We consider the special parameter set $A_1$ defined in the previous section. In addition, we say $a_1,\ldots,a_m\in\mathbb{Q}_{>0}$ are \textit{multiplicatively independent} if $a_1^{n_1}\cdots a_m^{n_m}=1$ for some integers $n_1,\ldots,n_m$ implies $n_1=\cdots=n_m=0$.

\begin{Theorem}\label{thm31}
For fixed $m$, $n$ and multiplicatively independent parameters $a_1,\ldots,a_m\in\mathbb{N}_{>1}$, let $A_1$ and $\mcA_n=\mcA_n(A_1)$ be defined as before:
$$\mcA_n=\mcA_n(A_1)=\mathcal{B}_n\cup\{x_i=0\mid1\leq i\leq n\}\cup\{x_i=a_kx_j\mid 1\leq i\neq j\leq n,1\leq k\leq m\}.$$
Then for fixed $m$ and $n$, the characteristic polynomial $\chi_{\mathcal{A}_n}$ and in particular the number of regions $r(\mathcal{A}_n)$ are independent of the $a_i$'s.
\end{Theorem}
\begin{proof} 
Let $\mathcal{A}_n^\prime=\mathcal{A}_n-\{x_i=0 \mid 1\leq i\leq n\}$. Repeated use of the deletion-restriction method in \cite[Lemma 2.2]{stanley1} gives:\begin{equation}\label{1}
\chi_{\mcA_n^\prime}(t)=\chi_{\mathcal{A}_n}(t)+n\chi_{\mathcal{A}_{n-1}}(t).
\end{equation}

From this recursive formula, it suffices to show that $\chi_{\mcA_n^\prime}$ is independent of the $a_i$'s, as long as they are multiplicatively independent.

We now adopt a similar argument as in \cite[Theorem 5.17]{stanley1} to calculate the characteristic polynomial of $\mathcal{A}'_n$. By Theorem \ref{Whitney}, we have
$$\chi_{\mcA_n^\prime}(t)=\sum_{\mathcal{B}\subseteq\mathcal{A}_n^\prime}(-1)^{\#\mathcal{B}}t^{n-\rank(\mathcal{B})}.$$ 

For each sub-arrangement $\mcB\subseteq\mathcal{A}_n^\prime$, we associate a generalized weighted graph $G_\mcB$ with vertex set $[n]$ and we connect $i$ and $j$ by an (undirected) edge with weight 1 if $x_i=x_j\in\mcB$, and connect $i$ and $j$ by an arrow from $i$ to $j$ with  weight $a_l$ if $x_j=a_lx_i\in\mcB$ for some $i\neq j$ and $a_l\neq 1$.  Hence there are $2m+1$ possible types of edges or arrows between $i$ and $j$ ($i\neq j$). Let $\Pi_\mcB\in\Pi_n$ be the partition of $[n]$ corresponding to $\mcB$, where each block contains the vertices in a connected component of $G_\mcB$. By exponential formula \cite[\S 5.1]{stanley3}, we have
\begin{equation}\label{2}
\sum_{n=0}^\infty\chi_{\mcA^\prime_n}(t)\frac{x^n}{n!}=\exp\left(\sum_{n=1}^\infty f_n(t)\frac{x^n}{n!}\right)
\end{equation}
where $$f_n(t)=\sum_{\substack{\mathcal{B}\subseteq\mathcal{A}^\prime_n\\ \Pi_{\mathcal{B}}=[n]}}(-1)^{\mathcal{\#B}}t^{n-\rank(\mathcal{B})}.$$

The conditions $\mathcal{B}\subseteq\mathcal{A}^\prime_n$ and $\Pi_\mcB=[n]$ are equivalent to $G_\mcB $ being connected hence equivalent to $\rank\mcB\geq n-1$ (the solution of the linear system $\mathcal{B}$ is at most one dimensional). Therefore $f_n(t)$ is a polynomial with degree at most 1. $\chi_{\mcA_0^\prime}(1)=\chi_{\mcA_1^\prime}(1)=1$ and by Theorem \ref{Zaslavsky}, $\chi_{\mcA_n^\prime}(1)=0$ for $n>1$ since $\mcA_n^\prime$ is central. Plugging $t=1$ into Equation \eqref{2}, we have 
$$1+x=\exp\left(\sum_{n=1}^\infty f_n(1)\frac{x^n}{n!}\right)$$
Hence $f_n(1)=(-1)^{n-1}(n-1)!$ and
\begin{equation}\label{3}
f_n(t)=b_n(t-1)+(-1)^{n-1}(n-1)!
\end{equation}
where $$b_n=\sum_{\substack{\mcB\subseteq\mcA_n^\prime\\\rank(\mcB)=n-1}}(-1)^{\#\mcB}=\sum_{\substack{\mcB\subseteq\mcA_n^\prime\\ \rank(\mcB)=n-1}}(-1)^{\#E(G_\mcB)}.$$

For $\mcB\subset\mcA_n^\prime$, the condition $\rank(\mcB)=n-1$ is equivalent to saying the solution space of the linear system $\mathcal{B}$ is 1 dimensional. Since $G_{\mathcal{B}}$ is connected, a nontrivial solution to the linear system $\mathcal{B}$ exists if and only if for each cycle in $G_\mcB$, the product of the weights appearing on arrows in the clockwise direction must equal that in the counterclockwise direction in this cycle. Since $a_i$'s are multiplicatively independent, the arrows with weight $a_i$ in the clockwise direction has the same number  as those in the counterclockwise direction for each cycle in $G_{\mathcal{B}}.$ An example of such a $\mathcal{B}\subseteq\mathcal{A}'_n$ is given in Figure \ref{fig0}.
\begin{figure}
\centering
\begin{tikzpicture}[
            > = stealth, 
            shorten > = 1pt, 
            auto,
            node distance = 3cm, 
            semithick 
        ]

        \tikzstyle{every state}=[
            draw = black,
            thick,
            fill = white,
            minimum size = 4mm
        ]

        \node[state] (s) {1};
        \node[state] (v1) [above right of=s] {2};
        \node[state] (v2) [right of=s] {3};
        \node[state] (v3) [below of=s] {4};
        \node[state] (t) [right of=v2] {5};
        \node[state] (k) [below of=v2] {6};

        \path[->] (s) edge node {3} (v1);
        \path[->] (s) edge node {3} (v2);
        \path[->] (s) edge node {2} (v3);
        \path[-] (v2) edge node {1} (v1);
        \path[->] (v3) edge node {3} (k);
        \path[-] (v1) edge node {1} (t);
        \path[->] (v2) edge node {2} (k);
\end{tikzpicture}
\caption{This figure above is the graph $G_{\mathcal{B}}$ where $a_1=2$, $a_2=3$ and $\mathcal{B}=\{x_2=x_5,x_2=3x_1,x_2=x_3,x_3=3x_1,x_4=2x_1,x_6=3x_4,x_6=2x_3\}\subseteq \mathcal{A}'_n$. The solution of the linear system $\mathcal{B}$ is exactly 1 dimensional, i.e. $\rank(\mathcal{B})=n-1$. If we add a weight 1 edge between vertices 1 and 6, $\rank(\mathcal{B})=n$ and the cycle consisting of vertices 1, 3, 6 (or 1, 4, 6) violates the conditions described in the last two paragraphs of the proof.}
\label{fig0}
\end{figure}

Hence the above sum of $(-1)^{\#E(G_\mcB)}$ is actually taken over connected generalized graphs on $[n]$ with $m+1$ different weights (or $2m+1$ types of edges/arrows between each pair of vertices) with the described cycle condition. This value has nothing to do with specific values of $a_i$'s, as long as they are multiplicatively independent. Hence we arrive at the conclusion.

\end{proof}

Although $(\mcA_n(A_1))_n$ is not a ESA, we have the following result analogous to Theorem \ref{esa}.
\begin{Corollary}\label{cor32}
Let $A_1 $ and $\mathcal{A}_n=\mathcal{A}_n(A_1)$ be as before, we have
$$\sum^{\infty}_{n=0}\chi_{\mathcal{A}_n}(t)\frac{x^n}{n!}=\left(\sum^{\infty}_{n=0}(-1)^nr(\mathcal{A}_n)\frac{x^n}{n!}\right)^{-\frac{t-1}{2}}.$$
\end{Corollary}
\begin{proof}
Equation \eqref{1} shows that $$\sum^{\infty}_{n=0}\chi_{\mathcal{A}_n}(t)\frac{x^n}{n!}=\frac{\sum^{\infty}_{n=0}\chi_{\mathcal{A}_n^\prime}(t)\frac{x^n}{n!}}{1+x}.$$
By Equation \eqref{2} and \eqref{3}, we have:
$$\sum^{\infty}_{n=0}\chi_{\mathcal{A}_n^\prime}(t)\frac{x^n}{n!}=\exp\left(\sum_{n=1}^\infty (b_n(t-1)+(-1)^{n-1}(n-1)!)\frac{x^n}{n!}\right)=(1+x)\exp\left(\sum_{n=1}^\infty b_n(t-1)\frac{x^n}{n!}\right).$$
Hence by Theorem \ref{Zaslavsky}, we have
\begin{align*}
\sum^{\infty}_{n=0}\chi_{\mathcal{A}_n}(t)\frac{x^n}{n!} & =\exp\left(\sum_{n=1}^\infty b_n(t-1)\frac{x^n}{n!}\right)\\
 & =\left(\exp\Bigg(-\sum_{n=1}^\infty 2 b_n\frac{x^n}{n!}\Bigg)\right)^{-\frac{t-1}{2}}\\
 & = \left(\sum^{\infty}_{n=0}\chi_{\mathcal{A}_n}(-1)\frac{x^n}{n!}\right)^{-\frac{t-1}{2}}\\
 & =\left(\sum^{\infty}_{n=0}(-1)^nr(\mathcal{A}_n)\frac{x^n}{n!}\right)^{-\frac{t-1}{2}}.
\end{align*}
\end{proof}

\section{Counting Independent Sets in Graphs}
Before dealing with independent sets in disjoint unions of sets of type $\tilde{G}(\mcA_n(A_1),q-1)$ defined in Section 2, we first consider the Catalan arrangements, $\mathcal{C}_n=\mathcal{A}_n(A_0)$ where $A_0=\{(1,1)\}$. The associated graph $G(\mathcal{C}_n,q)$ is therefore the cycle $C_q$ with $q$ vertices. By Proposition \ref{24}, we know the number $n$-element independent sets of $C_q$ is $$\frac{\chi_{\mathcal{C}_n}(q)}{n!}=\frac{q(q-n-1)_{n-1}}{n!}.$$

Let $G=C_{q_1}\cup C_{q_2}$ be the disjoint union of two cycles and denote by $s_n$ the number of n-element independent sets of $G$. The independence polynomial $I_G(x)=\sum_{n=0}^\infty s_nx^n$ satisfies $I_G(x)=I_{C_{q_1}}(x)I_{C_{q_2}}(x)$. Then we know
$$ s_n=\sum^{n}_{j=0}\frac{\chi_{\mathcal{C}_{n-j}}(q_1)}{(n-j)!}\cdot\frac{\chi_{\mathcal{C}_{j}}(q_2)}{j!}.$$
Since the Catalan arrangements form an exponential sequence of arrangements, by Theorem \ref{esa} we have 
$$\sum^{\infty}_{n=0}\chi_{\mathcal{C}_n}(q_{i})\frac{x^n} {n!}=\exp\left(\sum^{\infty}_{n=1}\tilde{\chi}_{\mathcal{C}_n}(q_{i})\frac{x^n}{n!}\right),\;\;i=1,2$$
where $\tilde{\chi}_{\mathcal{C}_n}(t)=c_nt$. Then
$$I_G(x) =\exp\left(\sum^{\infty}_{n = 1}(\tilde{\chi}_{\mathcal{C}_n}(q_1)+\tilde{\chi}_{\mathcal{C}_n}(q_2))\frac{x^n}{n!}\right)
=\exp\left(\sum^{\infty}_{n = 1}c_n(q_1+q_2)\frac{x^n}{n!}\right)
=\sum^{\infty}_{n=0}\chi_{\mathcal{C}_n}(q_1+q_2)\frac{x^n}{n!}.$$
Hence $s_n=\frac{\chi_{\mathcal{C}_n}(q_1+q_2)}{n!}=\frac{q_1+q_2}{n!}(q_1+q_2-n-1)_{n-1},$ which means the number of $n$-independent sets of a disjoint union $G=C_{q_1}\cup C_{q_2}$ is equal to that of $C_{q_1+q_2}.$ By induction, we know the number of $n$-independent sets of $C_{q_1}\cup \ldots\cup  C_{q_s}$ only depends on $n$ and $\sum q_i$ when $q_i$'s are powers of a large enough prime number.

Things gets more complicated when $x_i=0,\;1\leq i\leq n$, are contained in the arrangement, in which case $\mathcal{A}_n$ no longer forms an exponential sequence. We now study the disjoint union of graphs $G(\mcA_n(A_1),q_1) \cup\cdots\cup G(\mcA_n(A_1),q_s)$, or just $\tilde{G}(\mcA_n(A_1),q_1-1)\cup\cdots\cup \tilde{G}(\mcA_n(A_1),q_s-1)$. Recall from Section 2 that $\tilde{G}(\mcA_n(A_1),q-1)$ has vertex set $[q-1]$ and edges $ij$ iff $i\equiv a_k j$ for some $1\leq k\leq m$.

\begin{Theorem}
For multiplicatively independent parameters $a_1,\ldots,a_m\in\mathbb{N}_{>1}$, let $A_1$ and $\mcA_n=\mcA_n(A_1)$ be defined as in Section 2:
$$\mcA_n=\mcA_n(A_1)=\mathcal{B}_n\cup\{x_i=0\mid1\leq i\leq n\}\cup\{x_i=a_kx_j\mid 1\leq i\neq j\leq n,1\leq k\leq m\}.$$ 
Let G be the disjoint union $\tilde{G}(\mcA_n,q_1-1)\cup\cdots\cup\tilde{G}(\mcA_n,q_s-1)$ where $q_1,\ldots,q_s$ are powers of sufficiently large prime numbers. Then the number of $n$-element independent sets of $G$ depends only on $n$, $m,$ and the total number of vertices $\sum_{i=1}^s(q_i-1).$
\end{Theorem}
\begin{proof} 
By Proposition \ref{24}, the number $s_n$ of $n$-element independent sets of $G$ is calculated as: 
$$s_n=\sum_{\substack{j_1,j_2,...,j_s\geq 0\\j_1+j_2+\cdots+j_s=n}}\frac{\chi_{\mathcal{A}_{j_1}}(q_1)}{j_1!}\cdots\frac{\chi_{\mathcal{A}_{j_s}}(q_s)}{j_s!}.$$ 
The independence polynomial is obtained as
$$I_G(x)=\sum^{\infty}_{n=0}s_n x^n=\sum^{\infty}_{n=0}\chi_{\mathcal{A}_n}(q_1)\frac{x^n} {n!}\cdots \sum^{\infty}_{n=0}\chi_{\mathcal{A}_n}(q_s)\frac{x^n}{n!}.$$
Applying Corollary \ref{cor32}, we have
\begin{align*}
I_G(x) & =\exp\left(\sum^{\infty}_{n=1}b_n(q_1+\cdots+q_s-s)\frac{x^n}{n!}\right)\\
& = \sum^{\infty}_{n=0}\chi_{\mathcal{A}_n}(q_1+\cdots+q_s-s+1)\frac{x^n}{n!},
\end{align*}
which implies that $s_n=\chi_{\mathcal{A}_n}(q_1+\cdots+q_s-s+1)/n!.$ 

The conclusion follows after applying Theorem \ref{thm31}.
\end{proof}

\begin{Example}
Let $a_1=2,a_2=3$ and $G_k$ be the graph with vertex set $[k]$ and edges $ij$ if $i \equiv 2j\;(\textnormal{mod}\; k+1), $ or $i \equiv 3j\;(\textnormal{mod}\; k+1)$.

Then the number of independent sets of the disjoint union of $G_{18}$ and $G_{22}$ is equal to that of the graph $G_{40},$ as shown in Figure \ref{fig1}. Note that the number of independent sets of the disjoint union is independent of $a_i$'s, since they are multiplicatively independent.
\end{Example}

\begin{figure}
\centering
\begin{tikzpicture}[x=0.50cm,y=0.50cm]
\clip(-1.4,-2.5) rectangle (34,11.8);
\draw [thick] (8,5)--(7.759,6.368)--(2,8.464);
\draw [thick] (7.759,6.368)--(7.06,7.571)-- (0.936,7.571);
\draw [thick] (7.064,7.571)--(6,8.464)-- (0.241,6.368);
\draw [thick] (6,8.464)--(4.695,8.939)-- (0,5);
\draw [thick] (4.695,8.939)--(3.305,8.939)-- (0.241,3.632);
\draw [thick] (3.305,8.939)--(2,8.464)-- (0.936,2.429);
\draw [thick] (2,8.464)--(0.936,7.571)-- (2,1.536);
\draw [thick] (0.936,7.571)--(0.241,6.368)-- (3.305,1.061);
\draw [thick] (0.241,6.368)--(0,5)-- (4.695,1.061);
\draw [thick] (0,5)--(0.241,3.632)-- (6,1.536);
\draw [thick] (0.241,3.632)--(0.936,2.429)-- (7.064,2.429);
\draw [thick] (0.936,2.429)--(2,1.536)-- (7.759,3.632);
\draw [thick] (2,1.536)--(3.305,1.061)-- (8,5);
\draw [thick] (3.305,1.061)--(4.695,1.061)-- (7.759,6.368);
\draw [thick] (4.695,1.061)--(6,1.536)-- (7.064,7.571);
\draw [thick] (6,1.536)--(7.064,2.429)-- (6,8.464);
\draw [thick] (7.064,2.429)--(7.759,3.632)-- (4.695,8.939);
\draw [thick] (7.759,3.632)--(8,5)-- (3.305,8.939);
\draw [fill=black] (7.759,6.368) circle (1.7pt);
\draw [fill=black] (0,5) circle (1.7pt);
\draw [fill=black] (2,8.464) circle (1.7pt);
\draw [fill=black] (0.936,7.571) circle (1.7pt);
\draw [fill=black] (0.241,6.368) circle (1.7pt);
\draw [fill=black] (0.241,3.632) circle (1.7pt);
\draw [fill=black] (3.305,8.939) circle (1.7pt);
\draw [fill=black] (8,5) circle (1.7pt);
\draw [fill=black] (7.759,3.632) circle (1.7pt);
\draw [fill=black] (4.695,8.939) circle (1.7pt);
\draw [fill=black] (7.064,2.429) circle (1.7pt);
\draw [fill=black] (6,1.536) circle (1.7pt);
\draw [fill=black] (4.695,1.061) circle (1.7pt);
\draw [fill=black] (3.305,1.061) circle (1.7pt);
\draw [fill=black] (6,8.464) circle (1.7pt);
\draw [fill=black] (7.064,7.571) circle (1.7pt);
\draw [fill=black] (2,1.536) circle (1.7pt);
\draw [fill=black] (0.936,2.429) circle (1.7pt);

\draw [thick] (16,2.5)--(15.603,3.852)--(11.863,4.389);
\draw [thick] (15.603,3.852)--(14.539,4.774)--(11.101,3.204);
\draw [thick] (14.539,4.774)--(13.144,4.975)--(11.101,1.796);
\draw [thick] (13.144,4.975)--(11.863,4.389)--(11.863,0.611);
\draw [thick] (11.863,4.389)--(11.101,3.204)--(13.144,0.025);
\draw [thick] (11.101,3.204)--(11.101,1.796)--(14.539,0.226);
\draw [thick] (11.101,1.796)--(11.863,0.611)--(15.603,1.148);
\draw [thick] (11.863,0.611)--(13.144,0.025)--(16,2.5);
\draw [thick] (13.144,0.025)--(14.539,0.226)--(15.603,3.852);
\draw [thick] (14.539,0.226)--(15.603,1.148)--(14.539,4.774);
\draw [thick] (15.603,1.148)--(16,2.5)--(13.144,4.975);
\draw [fill=black] (15.603,3.852) circle (1.7pt);
\draw [fill=black] (11.863,4.389) circle (1.7pt);
\draw [fill=black] (11.101,3.204) circle (1.7pt);
\draw [fill=black] (13.144,4.975) circle (1.7pt);
\draw [fill=black] (16,2.5) circle (1.7pt);
\draw [fill=black] (15.603,1.148) circle (1.7pt);
\draw [fill=black] (14.539,4.774) circle (1.7pt);
\draw [fill=black] (14.539,0.226) circle (1.7pt);
\draw [fill=black] (13.144,0.025) circle (1.7pt);
\draw [fill=black] (11.863,0.611) circle (1.7pt);
\draw [fill=black] (11.101,1.796) circle (1.7pt);

\draw [thick] (16,8.5)--(15.603,9.852)--(11.863,10.389);
\draw [thick] (15.603,9.852)--(14.539,10.774)--(11.101,9.204);
\draw [thick] (14.539,10.774)--(13.144,10.975)--(11.101,7.796);
\draw [thick] (13.144,10.975)--(11.863,10.389)--(11.863,6.611);
\draw [thick] (11.863,10.389)--(11.101,9.204)--(13.144,6.025);
\draw [thick] (11.101,9.204)--(11.101,7.796)--(14.539,6.226);
\draw [thick] (11.101,7.796)--(11.863,6.611)--(15.603,7.148);
\draw [thick] (11.863,6.611)--(13.144,6.025)--(16,8.5);
\draw [thick] (13.144,6.025)--(14.539,6.226)--(15.603,9.852);
\draw [thick] (14.539,6.226)--(15.603,7.148)--(14.539,10.774);
\draw [thick] (15.603,7.148)--(16,8.5)--(13.144,10.975);
\draw [fill=black] (15.603,9.852) circle (1.7pt);
\draw [fill=black] (11.863,10.389) circle (1.7pt);
\draw [fill=black] (11.101,9.204) circle (1.7pt);
\draw [fill=black] (13.144,10.975) circle (1.7pt);
\draw [fill=black] (16,8.5) circle (1.7pt);
\draw [fill=black] (15.603,7.148) circle (1.7pt);
\draw [fill=black] (14.539,10.774) circle (1.7pt);
\draw [fill=black] (14.539,6.226) circle (1.7pt);
\draw [fill=black] (13.144,6.025) circle (1.7pt);
\draw [fill=black] (11.863,6.611) circle (1.7pt);
\draw [fill=black] (11.101,7.796) circle (1.7pt);

\draw [thick] (27.536,9.036)--(26.270,9.955)--(23.227,7.878);
\draw [thick] (26.270,9.955)--(24.782,10.438)--(22.531,7.523);
\draw [thick] (24.782,10.438)--(23.218,10.438)--(21.977,6.969);
\draw [thick] (23.218,10.438)--(21.730,9.955)--(21.622,6.273);
\draw [thick] (21.730,9.955)--(20.464,9.036)--(21.5,5.5);
\draw [thick] (20.464,9.036)--(19.545,7.770)--(21.622,4.727);
\draw [thick] (19.545,7.770)--(19.062,6.282)--(21.977,4.031);
\draw [thick] (19.062,6.282)--(19.062,4.718)--(22.531,3.477);
\draw [thick] (19.062,4.718)--(19.545,3.230)--(23.227,3.122);
\draw [thick] (19.545,3.230)--(20.464,1.964)--(24,3);
\draw [thick] (20.464,1.964)--(21.730,1.045)--(24.773,3.122);
\draw [thick] (21.730,1.045)--(23.218,0.562)--(25.469,3.477);
\draw [thick] (23.218,0.562)--(24.782,0.56)--(26.023,4.031);2
\draw [thick] (24.782,0.562)--(26.270,1.045)--(26.378,4.727);
\draw [thick] (26.270,1.045)--(27.536,1.964)--(26.5,5.5);
\draw [thick] (27.536,1.964)--(28.455,3.230)--(26.378,6.273);
\draw [thick] (28.455,3.230)--(28.938,4.718)--(26.023,6.969);
\draw [thick] (28.938,4.718)--(28.938,6.282)--(25.469,7.523);
\draw [thick] (28.938,6.282)--(28.455,7.770)--(24.773,7.878);
\draw [thick] (28.455,7.770)--(27.536,9.036)--(24,8);

\draw [thick] (26.5,5.5)--(26.378,6.273)--(26.270,9.955);
\draw [thick] (26.378,6.273)--(26.023,6.969)--(24.782,10.438);
\draw [thick] (26.023,6.969)--(25.469,7.523)--(23.218,10.438);
\draw [thick] (25.469,7.523)--(24.773,7.878)--(21.730,9.955);
\draw [thick] (24.773,7.878)--(24,8)--(20.464,9.036);
\draw [thick] (24,8)--(23.227,7.878)--(19.545,7.770);
\draw [thick] (23.227,7.878)--(22.531,7.523)--(19.062,6.282);
\draw [thick] (22.531,7.523)--(21.977,6.969)--(19.062,4.718);
\draw [thick] (21.977,6.969)--(21.622,6.273)--(19.545,3.230);
\draw [thick] (21.622,6.273)--(21.5,5.5)--(20.464,1.964);
\draw [thick] (21.5,5.5)--(21.622,4.727)--(21.730,1.045);
\draw [thick] (21.622,4.727)--(21.977,4.031)--(23.218,0.562);
\draw [thick] (21.977,4.031)--(22.531,3.477)--(24.782,0.56);
\draw [thick] (22.531,3.477)--(23.227,3.122)--(26.270,1.045);
\draw [thick] (23.227,3.122)--(24,3)--(27.536,1.964);
\draw [thick] (24,3)--(24.773,3.122)--(28.455,3.230);
\draw [thick] (24.773,3.122)--(25.469,3.477)--(28.938,4.718);
\draw [thick] (25.469,3.477)--(26.023,4.031)--(28.938,6.282);
\draw [thick] (26.023,4.031)--(26.378,4.727)--(28.455,7.770);
\draw [thick] (26.378,4.727)--(26.5,5.5)--(27.536,9.036);

\draw [fill=black] (27.536,9.036) circle (1.7pt);
\draw [fill=black] (26.270,9.955) circle (1.7pt);
\draw [fill=black] (24.782,10.438) circle (1.7pt);
\draw [fill=black] (23.218,10.438) circle (1.7pt);
\draw [fill=black] (21.730,9.955) circle (1.7pt);
\draw [fill=black] (20.464,9.036) circle (1.7pt);
\draw [fill=black] (19.545,7.770) circle (1.7pt);
\draw [fill=black] (19.062,6.282) circle (1.7pt);
\draw [fill=black] (19.062,4.718) circle (1.7pt);
\draw [fill=black] (19.545,3.230) circle (1.7pt);
\draw [fill=black] (20.464,1.964) circle (1.7pt);
\draw [fill=black] (21.730,1.045) circle (1.7pt);
\draw [fill=black] (23.218,0.562) circle (1.7pt);
\draw [fill=black] (24.782,0.562) circle (1.7pt);
\draw [fill=black] (26.270,1.045) circle (1.7pt);
\draw [fill=black] (27.536,1.964) circle (1.7pt);
\draw [fill=black] (28.455,3.230) circle (1.7pt);
\draw [fill=black] (28.938,4.718) circle (1.7pt);
\draw [fill=black] (28.938,6.282) circle (1.7pt);
\draw [fill=black] (28.455,7.770) circle (1.7pt);

\draw [fill=black] (26.5,5.5) circle (1.7pt);
\draw [fill=black] (26.378,6.273) circle (1.7pt);
\draw [fill=black] (26.023,6.969) circle (1.7pt);
\draw [fill=black] (25.469,7.523) circle (1.7pt);
\draw [fill=black] (24.773,7.878) circle (1.7pt);
\draw [fill=black] (24,8) circle (1.7pt);
\draw [fill=black] (23.227,7.878) circle (1.7pt);
\draw [fill=black] (22.531,7.523) circle (1.7pt);
\draw [fill=black] (21.977,6.969) circle (1.7pt);
\draw [fill=black] (21.622,6.273) circle (1.7pt);
\draw [fill=black] (21.5,5.5) circle (1.7pt);
\draw [fill=black] (21.622,4.727) circle (1.7pt);
\draw [fill=black] (21.977,4.031) circle (1.7pt);
\draw [fill=black] (22.531,3.477) circle (1.7pt);
\draw [fill=black] (23.227,3.122) circle (1.7pt);
\draw [fill=black] (24,3) circle (1.7pt);
\draw [fill=black] (24.773,3.122) circle (1.7pt);
\draw [fill=black] (25.469,3.477) circle (1.7pt);
\draw [fill=black] (26.023,4.031) circle (1.7pt);
\draw [fill=black] (26.378,4.727) circle (1.7pt);

\draw[color=black] (4,-1.5) node {$G_{18}$};
\draw[color=black] (13.5,-1.5) node {$G_{22}$};
\draw[color=black] (24,-1.5) node {$ G_{40}$};
\draw[color=black] (8.5 ,5) node {\hspace{10mm}\Huge{ $\cup$}};

\draw[color=black] (3.4,.55) node {1};
\draw[color=black] (2,1) node {2};
\draw[color=black] (.7,2.0) node {4};
\draw[color=black] (-0.1,3.4) node {8};
\draw[color=black] (-0.6,4.9) node {16};
\draw[color=black] (-0.3,6.43) node {13};
\draw[color=black] (.6,7.7) node {7};
\draw[color=black] (1.8,8.9) node {14};
\draw[color=black] (3.3,9.5) node {9};
\draw[color=black] (4.8,9.4) node {18};
\draw[color=black] (6.3,8.9) node {17};
\draw[color=black] (7.6,7.7) node {15};
\draw[color=black] (8.4,6.43) node {11};
\draw[color=black] (8.5,4.9) node {3};
\draw[color=black] (8.2,3.4) node {6};
\draw[color=black] (7.4,1.9) node {12};
\draw[color=black] (6.1,1) node {5};
\draw[color=black] (4.7,.55) node {10};

\draw[color=black] (11.45,6.4) node {2};
\draw[color=black] (10.7,7.7) node {4};
\draw[color=black] (10.7,9.2) node {8};
\draw[color=black] (11.3,10.5) node {16};
\draw[color=black] (13,11.4) node {9};
\draw[color=black] (14.7,11.2) node {18};
\draw[color=black] (16.1,10) node {13};
\draw[color=black] (16.4,8.5) node {3};
\draw[color=black] (16.1,7) node {6};
\draw[color=black] (14.9,5.85) node {12};
\draw[color=black] (13.5,5.7) node {1};

\draw[color=black] (11.45,6.4-6.1) node {21};
\draw[color=black] (10.6,7.7-6.1) node {19};
\draw[color=black] (10.6,9.2-6.1) node {15};
\draw[color=black] (11.4,10.5-6.1) node {7};
\draw[color=black] (12.7,11.4-6.1) node {14};
\draw[color=black] (14.9,11.1-6.1) node {5};
\draw[color=black] (16.1,10-6.1) node {10};
\draw[color=black] (16.6,8.6-6.1) node {20};
\draw[color=black] (16.1,7-6.1) node {17};
\draw[color=black] (14.7,5.85-6.1) node {11};
\draw[color=black] (12.9,-.4) node {22};

\draw[color=black] (20.1,9.2) node {1};
\draw[color=black] (21.5,10.3) node {2};
\draw[color=black] (23.1,10.9) node {4};
\draw[color=black] (25,10.8) node {8};
\draw[color=black] (26.7,10.2) node {16};
\draw[color=black] (28.1,9.2) node {32};
\draw[color=black] (29,7.9) node {23};
\draw[color=black] (29.4,6.25) node {5};
\draw[color=black] (29.5,4.6) node {10};
\draw[color=black] (29,2.9) node {20};
\draw[color=black] (27.8,1.6) node {40};
\draw[color=black] (26.6,.6) node {39};
\draw[color=black] (25,.1) node {37};
\draw[color=black] (23.5,.1) node {33};
\draw[color=black] (21.8,.5) node {25};
\draw[color=black] (20.2,1.6) node {9};
\draw[color=black] (19.1,2.9) node {18};
\draw[color=black] (18.5,4.6) node {36};
\draw[color=black] (18.5,6.2) node {31};
\draw[color=black] (19.1,7.9) node {21};

\draw[color=black] (24,7.7) node {\tiny{14}};
\draw[color=black] (24.7,7.52) node {\tiny{28}};
\draw[color=black] (25.2,7.2) node {\tiny{15}};
\draw[color=black] (25.6,6.8) node {\tiny{30}};
\draw[color=black] (25.93,6.2) node {\tiny{19}};
\draw[color=black] (26.1,5.5) node {\tiny{38}};
\draw[color=black] (25.9,4.8) node {\tiny{35}};
\draw[color=black] (25.65,4.2) node {\tiny{29}};
\draw[color=black] (25.3,3.8) node {\tiny{17}};
\draw[color=black] (24.8,3.5) node {\tiny{34}};
\draw[color=black] (24,3.35) node {\tiny{27}};
\draw[color=black] (23.3,3.5) node {\tiny{13}};
\draw[color=black] (22.8,3.8) node {\tiny{26}};
\draw[color=black] (22.35,4.2) node {\tiny{11}};
\draw[color=black] (22.1,4.75) node {\tiny{22}};
\draw[color=black] (21.85,5.5) node {\tiny{3}};
\draw[color=black] (21.92,6.2) node {\tiny{6}};
\draw[color=black] (22.4,6.8) node {\tiny{12}};
\draw[color=black] (22.9,7.3) node {\tiny{24}};
\draw[color=black] (23.3,7.55) node {\tiny{7}};
\end{tikzpicture}
\caption{}
\label{fig1}
\end{figure}

Now we turn to the third category $\{\mcA_n(A_2)\}_n$ mentioned in Section 2 where $A_2=\{(1,b_1),\ldots,(1,b_m)\}$ and
$$\mcA_n(A_2)=\mathcal{B}_n\cup\{x_i=x_j+b_k\mid 1\leq i\neq j\leq n,1\leq k\leq m\}.$$

\begin{Lemma}\label{lem42}
In the setting above, $G(\mcA_n(A_2),q)$ is the graph with vertex set $[q]$ and edges $ij$ iff $i-j\equiv b_k$ for some $k$. Then for any sufficiently large enough $q$ (not necessarily a prime power), the number of $n\con$element independent sets of $G(\mcA_n(A_2),q)$ is $\frac{\chi_{\mathcal{A}_n(A_2)}(q)}{n!}$.
\end{Lemma}
\begin{proof}
Proposition \ref{24} already shows that this result is true for $q$ being a large enough prime power, so it suffices to show that the number of $n$-element independent sets can be expressed as a polynomial in $q$, so must be $\frac{\chi_{\mathcal{A}_n(A_2)}(q)}{n!}.$

Similar to the proof of Proposition \ref{24}, we are still counting the number of tuples $(i_1,\ldots,i_n)\in[q]^n$ that's outside the union of hyperplanes in $\mathcal{A}_n(A_2)$. By inclusion-exclusion principle, we know that this number is the signed sum of the number of elements in $H_1\cap\cdots\cap H_k$,  $H_1,\ldots,H_k\in\mathcal{A}_n(A_2)$. This is counting the solution of a linear system defined in $\mathbb{Z}/q\mathbb{Z}$, where $q$ is not necessarily a prime power. But the normals of our hyperplanes are vectors with only one 1 and -1, and all other entries are zero, so the Gaussian elimination procedure to obtain the row reduced echelon form(RREF) does not involve multiplication and division. Thus as long as $q$ is large enough, $|H_1\cap\cdots\cap H_k|$ is equal to 0 or $q^{\#\text{free variables in the RREF}}$, which is a polynomial. So we arrived at the conclusion.
\end{proof}

\begin{Theorem}
In the same setting as above, let $F$ be the disjoint union $G(\mcA_n(A_2),q_1)\cup\cdots\cup G(\mcA_n(A_2),q_s)$ where $q_i$'s are large enough (but not necessarily prime powers). Then the number of $n$-element independent sets in $F$ depends only on $b_i$'s, $n,$ and $\sum_{i=1}^s{q_i}.$
\end{Theorem}
\begin{proof}
For simplicity denote $\mcA_n=\mcA_n(A_2)$. Since $\mcA_n$ forms an exponential sequence, by Theorem \ref{esa} we have 
\begin{align*}
\sum^{\infty}_{n=0}\chi_{\mathcal{A}_n}(k_i)\frac{x^n}      {n!}&=\exp\left(\sum_{n\geq 1}\tilde{\chi}_{\mathcal{A}_n}(k_i)\frac{x^n}{n!}\right), \;\; i = 1,2,...,s
\end{align*}
where $\tilde{\chi}_{\mathcal{A}_n}(k_i)=u_n k_i, \;\; i = 1,2,...,s$. Here, $u_n$ only depends on $n$ and the $b_i$'s. 

Using Lemma \ref{lem42}, the independence polynomial of $F$ can be calculated as: 
\begin{align*} 
I_F(x)=\sum^{\infty}_{n=0}s_nx^n
&=\sum^{\infty}_{n=0}\sum_{\substack{j_1,...,j_s\geq 0\\j_1+\cdots+j_s=n}}\chi_{\mathcal{A}_{j_1}}(k_1)\cdots\chi_{\mathcal{A}_{j_s}}(k_s)\frac{x^n}{j_1!j_2!\cdots j_s!} \\
&=\sum^{\infty}_{n=0}\chi_{\mathcal{A}_n}(k_1)\frac{x^n}{n!}\cdots \sum^{\infty}_{n=0}\chi_{\mathcal{A}_n}(k_s)\frac{x^n}{n!} \\
&=\exp\left(\sum_{n\geq 1}(\tilde{\chi}_{\mathcal{A}_n}(k_1)+\tilde{\chi}_{\mathcal{A}_n}(k_2)+\cdots +\tilde{\chi}_{\mathcal{A}_n}(k_s))\frac{x^n}{n!}\right)\\
&=\sum^{\infty}_{n=0}\chi_{\mathcal{A}_n}(\sum_{i} k_i)\frac{x^n}{n!} .
\end{align*}
Hence the conclusion follows.
\end{proof}

\begin{Example}
Let $b_1=1,b_2=3$. Let $G_k$ be the graph with vertex set $[k]$ and edges $ij$ if $i -j\equiv 1\;(\textnormal{mod}\; k), $ or $i -j\equiv 3\;(\textnormal{mod}\; k),$ as shown in Figure \ref{fig2}. The number of $n$-element independent sets of the disjoint union $G = G_{k_1}\cup G_{k_2}$ is equal to that of the graph $G(k_1+k_2),$ which can be calculated by $\chi_{\mathcal{A}_n}(k_1+k_2)/n!.$ 
\end{Example}

\begin{figure}[h]
\centering
\begin{tikzpicture}[x=0.50cm,y=0.45cm]
\clip(-5.2,-1.8) rectangle (34,16.3);
\draw [dotted] (2,12) arc (30:60:2.3095);
\draw [dotted] (-2,12) arc (150:120:2.3095);
\draw [dotted] (2,4) arc (-30:-60:2.3095);
\draw [dotted] (-2,4) arc (-150:-120:2.3095);
\draw [thick] (-3,5)--(-3,8)--(-3,11);
\draw [thick] (3,5)--(3,8)--(3,11);
\draw [thick, dashed] (-3,11)--(-3,12);
\draw [thick, dashed] (-3,5)--(-3,4);
\draw [thick, dashed] (3,11)--(3,12);
\draw [thick, dashed] (3,5)--(3,4);
\draw [thick, dashed] (3,12) arc (15:165:3.1058);
\draw [thick, dashed] (3,4) arc (-15:-165:3.1058);
\draw [thick, dashed] (-3,4) arc (-90:0:1.5);
\draw [thick] (-1.5,5.5) arc (0:90:1.5);
\draw [thick] (-3,5) arc (-90:90:1.5);
\draw [thick] (-3,6) arc (-90:90:1.5);
\draw [thick] (-3,7) arc (-90:90:1.5);
\draw [thick] (-3,8) arc (-90:90:1.5);
\draw [thick] (-3,9) arc (-90:0:1.5);
\draw [thick, dashed] (-3,9) arc (-90:90:1.5);
\draw [thick, dashed] (3,12) arc (90:180:1.5);
\draw [thick] (1.5,10.5) arc (180:270:1.5);
\draw [thick] (3,11) arc (90:270:1.5);
\draw [thick] (3,10) arc (90:270:1.5);
\draw [thick] (3,9) arc (90:270:1.5);
\draw [thick] (3,8) arc (90:270:1.5);
\draw [thick] (3,7) arc (90:180:1.5);
\draw [thick, dashed] (3,7) arc (90:270:1.5);
\draw [fill=black] (-3,5) circle (1.5pt);
\draw [fill=black] (-3,6) circle (1.5pt);
\draw [fill=black] (-3,7) circle (1.5pt);
\draw [fill=black] (-3,8) circle (1.5pt);
\draw [fill=black] (-3,9) circle (1.5pt);
\draw [fill=black] (-3,10) circle (1.5pt);
\draw [fill=black] (-3,11) circle (1.5pt);
\draw [fill=black] (3,5) circle (1.5pt);
\draw [fill=black] (3,6) circle (1.5pt);
\draw [fill=black] (3,7) circle (1.5pt);
\draw [fill=black] (3,8) circle (1.5pt);
\draw [fill=black] (3,9) circle (1.5pt);
\draw [fill=black] (3,10) circle (1.5pt);
\draw [fill=black] (3,11) circle (1.5pt);

\draw [dotted] (13,11) arc (30:60:2.3095);
\draw [dotted] (9,11) arc (150:120:2.3095);
\draw [dotted] (13,5) arc (-30:-60:2.3095);
\draw [dotted] (9,5) arc (-150:-120:2.3095);
\draw [thick] (8,6)--(8,8)--(8,10);
\draw [thick] (14,6)--(14,8)--(14,10);
\draw [thick, dashed] (8,10)--(8,11);
\draw [thick, dashed] (8,6)--(8,5);
\draw [thick, dashed] (14,10)--(14,11);
\draw [thick, dashed] (14,6)--(14,5);
\draw [thick, dashed] (14,11) arc (15:165:3.1058);
\draw [thick, dashed] (14,5) arc (-15:-165:3.1058);
\draw [thick, dashed] (8,5) arc (-90:0:1.5);
\draw [thick] (9.5,6.5) arc (0:90:1.5);
\draw [thick] (8,6) arc (-90:90:1.5);
\draw [thick] (8,7) arc (-90:90:1.5);
\draw [thick] (8,8) arc (-90:0:1.5);
\draw [thick, dashed] (8,8) arc (-90:90:1.5);
\draw [thick, dashed] (14,11) arc (90:180:1.5);
\draw [thick] (12.5,9.5) arc (180:270:1.5);
\draw [thick] (14,10) arc (90:270:1.5);
\draw [thick] (14,9) arc (90:270:1.5);
\draw [thick] (14,8) arc (90:180:1.5);
\draw [thick, dashed] (14,8) arc (90:270:1.5);
\draw [fill=black] (8,6) circle (1.5pt);
\draw [fill=black] (8,7) circle (1.5pt);
\draw [fill=black] (8,8) circle (1.5pt);
\draw [fill=black] (8,9) circle (1.5pt);
\draw [fill=black] (8,10) circle (1.5pt);
\draw [fill=black] (14,10) circle (1.5pt);
\draw [fill=black] (14,9) circle (1.5pt);
\draw [fill=black] (14,8) circle (1.5pt);
\draw [fill=black] (14,7) circle (1.5pt);
\draw [fill=black] (14,6) circle (1.5pt);

\draw [dotted] (24,14) arc (30:60:2.3095);
\draw [dotted] (20,14) arc (150:120:2.3095);
\draw [dotted] (24,2) arc (-30:-60:2.3095);
\draw [dotted] (20,2) arc (-150:-120:2.3095);
\draw [thick] (19,3)--(19,8)--(19,13);
\draw [thick] (25,3)--(25,8)--(25,13);
\draw [thick, dashed] (19,13)--(19,14);
\draw [thick, dashed] (19,3)--(19,2);
\draw [thick, dashed] (25,13)--(25,14);
\draw [thick, dashed] (25,3)--(25,2);
\draw [thick, dashed] (25,14) arc (15:165:3.1058);
\draw [thick, dashed] (25,2) arc (-15:-165:3.1058);
\draw [thick, dashed] (19,2) arc (-90:0:1.5);
\draw [thick] (20.5,3.5) arc (0:90:1.5);
\draw [thick] (19,3) arc (-90:90:1.5);
\draw [thick] (19,4) arc (-90:90:1.5);
\draw [thick] (19,5) arc (-90:90:1.5);
\draw [thick] (19,6) arc (-90:90:1.5);
\draw [thick] (19,7) arc (-90:90:1.5);
\draw [thick] (19,8) arc (-90:90:1.5);
\draw [thick] (19,9) arc (-90:90:1.5);
\draw [thick] (19,10) arc (-90:90:1.5);
\draw [thick, dashed] (19,11) arc (-90:90:1.5);
\draw [thick] (19,11) arc (-90:0:1.5);
\draw [thick, dashed] (25,14) arc (90:180:1.5);
\draw [thick] (23.5,12.5) arc (180:270:1.5);
\draw [thick] (25,13) arc (90:270:1.5);
\draw [thick] (25,12) arc (90:270:1.5);
\draw [thick] (25,11) arc (90:270:1.5);
\draw [thick] (25,10) arc (90:270:1.5);
\draw [thick] (25,9) arc (90:270:1.5);
\draw [thick] (25,8) arc (90:270:1.5);
\draw [thick] (25,7) arc (90:270:1.5);
\draw [thick] (25,6) arc (90:270:1.5);
\draw [thick] (25,5) arc (90:180:1.5);
\draw [thick, dashed] (25,5) arc (90:270:1.5);
\draw [fill=black] (19,3) circle (1.5pt);
\draw [fill=black] (19,4) circle (1.5pt);
\draw [fill=black] (19,5) circle (1.5pt);
\draw [fill=black] (19,6) circle (1.5pt);
\draw [fill=black] (19,7) circle (1.5pt);
\draw [fill=black] (19,8) circle (1.5pt);
\draw [fill=black] (19,9) circle (1.5pt);
\draw [fill=black] (19,10) circle (1.5pt);
\draw [fill=black] (19,11) circle (1.5pt);
\draw [fill=black] (19,12) circle (1.5pt);
\draw [fill=black] (19,13) circle (1.5pt);
\draw [fill=black] (25,3) circle (1.5pt);
\draw [fill=black] (25,4) circle (1.5pt);
\draw [fill=black] (25,5) circle (1.5pt);
\draw [fill=black] (25,6) circle (1.5pt);
\draw [fill=black] (25,7) circle (1.5pt);
\draw [fill=black] (25,8) circle (1.5pt);
\draw [fill=black] (25,9) circle (1.5pt);
\draw [fill=black] (25,10) circle (1.5pt);
\draw [fill=black] (25,11) circle (1.5pt);
\draw [fill=black] (25,12) circle (1.5pt);
\draw [fill=black] (25,13) circle (1.5pt);

\draw[color=black] (0,-1.4) node {$G_{k_1}$};
\draw[color=black] (11,-1.4) node {$G_{k_2}$};
\draw[color=black] (22,-1.4) node {$G_{k_1+k_2}$};
\draw[color=black] (5.5,8) node {\Huge{$\cup$}};

\end{tikzpicture}
\caption{}
\label{fig2}
\end{figure}

We now consider the graph with additional edge attachment to the previous ones.
\begin{Corollary}

In the same setting as above, let $\bar{F}$ be the disjoint union $\bar{G}(\mcA_n(A_2),q_1)\cup\cdots\cup\bar{G}(\mcA_n(A_2),q_s)$ where $\bar{G}(\mcA_n(A_2),q)$ comes from $G(\mcA_n(A_2),q)$ by adding a copy of vertices $\{1',\ldots,q'\}$ and connect $j$ with $j'$, for $1\leq j\leq n$. Then the number of $n$-element independent sets of $\bar{F}$ depends only on $b_i$'s, $n,$ and $\sum_{i=1}^s{q_i}.$

\end{Corollary}
\begin{proof}
Without loss of generality, suppose $s=2$ and denote $\mcA_n=\mcA_n(A_2)$.
The number of $n$-element independent sets of $\bar{G}(\mcA_n(A_2),q)$ is: $$s_n(\bar{G}(\mcA_n(A_2),q))=\sum_{\substack{i,j\geq 0\\i+j=n}}\frac{\chi_{\mathcal{A}_{i}}(q)}{i!}\cdot\frac{\chi_{\mathcal{B}_{j}}(q-i)}{j!}.$$
Note that both $\mathcal{A}_n$ and $\mathcal{B}_n$ form exponential sequences of arrangements.  We have the following calculation:
\begin{align*}
s_n(\bar{F})&=\sum_{\substack{i_1,i_2,j_1,j_2\geq 0\\i_1+i_2+j_1+j_2=n}}\frac{\chi_{\mathcal{A}_{i_1}}(q_1)}{i_1!}\cdot\frac{\chi_{\mathcal{B}_{j_1}}(q_1-i_1)}{j_1!}\cdot\frac{\chi_{\mathcal{A}_{i_2}}(q_2)}{i_2!}\cdot\frac{\chi_{\mathcal{B}_{j_2}}(q_2-i_2)}{j_2!}\\
&=\sum_{\substack{i_1,i_2,j\geq 0\\i_1+i_2+j=n}}\left(\frac{\chi_{\mathcal{A}_{i_1}}(q_1)}{i_1!}\cdot\frac{\chi_{\mathcal{A}_{i_2}}(q_2)}{i_2!}\right)\cdot\frac{\chi_{\mathcal{B}_{j}}(q_1+q_2-i_1-i_2)}{j!}\\
&=\sum_{\substack{i,j\geq 0\\i+j=n}}\frac{\chi_{\mathcal{A}_{i}}(q_1+q_2)}{i!}\cdot\frac{\chi_{\mathcal{B}_{j}}(q_1+q_2-i)}{j!}\\
& =s_n(\bar{G}(\mcA_n(A_2),q)).
\end{align*}
This implies the number of independent sets of the disjoint union $\bar{F} $ only depends on $b_i$, $n$ and $k_1+k_2$. 
\end{proof}

After studying three special cases of the parameter set $A$, we now form a conclusion for general $A\subset\mathbb{Z}\times\mathbb{Z}$.

\begin{Proposition}
For a general parameter set $A\subset\mathbb{Z}\times\mathbb{Z}$, we have the arrangements $\mcA_n(A)$ and the corresponding graph $G(\mcA_n(A),q)$ given in Section 2. Let $H$ be the disjoint union of $G(\mcA_n(A),q_1)\cup\cdots\cup G(\mcA_n(A),q_s)$ for $q_i$ being powers of large enough prime numbers. Then the number of $n$-element independent sets of $H$ depends only on $n,s,A$ and $\sum_i{q_i},$. Furthermore, this number is independent of $s$ if and only if $\mcA_n(A)$ is not essential.
\end{Proposition}
\begin{proof}
For simplicity, denote $\mcA_n=\mcA_n(A)$. Similar to the proof of Theorem \ref{thm31}, we apply exponential formula and obtain
$$\sum^{\infty}_{n=0}\chi_{\mathcal{A}_n}(q_i)\frac{x^n}{n!}=\exp\left(\sum_{n= 1}^\infty\tilde{\chi}_{\mathcal{A}_n}(q_i)\frac{x^n}{n!}\right), \;\; i = 1,2,...,s$$
where $\tilde{\chi}_{\mathcal{A}_n}(q)=d_nq+c_n$.

Using Proposition \ref{24}, the independence polynomial of $H$ is as follows:
\begin{align*}
I_H(x)=\sum^{\infty}_{n=0}s_n{x^n}&=\exp\left(\sum_{n= 1}^\infty (\tilde{\chi}_{\mathcal{A}_n}(q_1)+\cdots +\tilde{\chi}_{\mathcal{A}_n}(q_s))\frac{x^n}{n!}\right)\\
&=\sum^{\infty}_{n=0}\chi_{\mathcal{A}_n}(\sum_i{q_i}+(s-1)\cdot\frac{c_n}{d_n})\frac{x^n}{n!}.
\end{align*}

Therefore, the number of $n$-element independent set in $H$ only depends on $n,s,A,\sum_i{q_i}$. It is independent of $s$ if and only if $c_n=0$. Plugging $q_i=0$, we get $\chi_{\mathcal{A}_{n}}(0)=0$ for $n\geq1$. This is equivalent to $\rank(\mathcal{A}_n) \leq n-1,$ i.e. $\mcA_n$ is not essential.
\end{proof}

\section{Relationship with the extended Catalan Arrangements}
In this section, we study the characteristic polynomial of $\chi_{\mcA_n(A_1)}$ again, but do not restrict to the multiplicatively independent parameters. We will relate it with the extended Catalan arrangements and compute it explicitly in several special cases.

\begin{Theorem}\label{thm51}
Let $\mcA_n(A_1)=\mathcal{B}_n\cup\{x_i=0\mid1\leq i\leq n\}\cup\{x_i=a_kx_j\mid 1\leq i\neq j\leq n,1\leq k\leq m\}$ be as before with all $a_i\in\mathbb{N}_{>1}$ and $\tilde{\mathcal{C}}_n$ be a deformation of the braid arrangement:
$$\tilde{\mathcal{C}}_n=\{x_i-x_j=b_k\mid 1\leq i\neq j\leq n,0\leq k\leq m\}$$
where $b_0=0$ and $b_k=\log a_k/\log a_1$ for $1\leq k\leq m$. Then $\chi_{\mathcal{A}_n(A_1)}(t)=\chi_{\tilde{\mathcal{C}}_n}(t-1).$
\end{Theorem}

\begin{proof}
For simplicity, denote $\mcA_n=\mcA_n(A_1)$. First consider the regions of $\mcA_n$ contained in different big regions separated by hyperplanes $\{x_i=0\mid 1\leq i\leq n\}$. In the orthants which are intersections of $i$ positive half-spaces and $n-i$ negative half-spaces, we change variables via $x_l=a^{y_l}_1$ when $x_l\geq 0$ and $x_l=-a^{y_l}_1$ when $x_l< 0$, then the number of regions of $\mcA_n$ in this big region is equal to $r(\tilde{\mathcal{C}}_{i})\cdot r(\tilde{\mathcal{C}}_{n-i})$.
Hence $$r(\mathcal{A}_n)=\sum^n_{i=0}\binom{n}{i}r(\tilde{\mathcal{C}}_{i})r(\tilde{\mathcal{C}}_{n-i}).$$
$$\sum^\infty_{n=0} r(\mathcal{A}_n)\frac{x^n}{n!}=\sum^\infty_{n=0} \sum^n_{i=0}\binom{n}{i}r(\tilde{\mathcal{C}}_{i})r(\tilde{\mathcal{C}}_{n-i})\frac{x^n}{n!}
=\left(\sum^\infty_{n=0} r(\tilde{\mathcal{C}}_n)\frac{x^n}{n!}\right)^2.$$
Using Theorem \ref{esa} and Corollary \ref{cor32}, we have 
\begin{align*}
\sum_{n\geq0}\chi_{\tilde{\mathcal{C}}_{n}}(t)\frac{(-x)^n}{n!}&=\left(\sum_{n\geq0} r(\tilde{\mathcal{C}}_{n})\frac{x^n}{n!}\right)^{-t}\\
\sum_{n\geq0} \chi_{\mathcal{A}_n}(t)\frac{(-x)^n}{n!}&=\left(\sum_{n\geq0} r(\mathcal{A}_n)\frac{x^n}{n!}\right)^{-(t-1)/2}.
\end{align*}
The above three equations leads to $\chi_{\mathcal{A}_n}(t)=\chi_{\tilde{\mathcal{C}}_{n}}(t-1).$
\end{proof}
\noindent

For applications, we give the following two results.

\begin{Proposition}
Let $\mcA_n=\mathcal{B}_n\cup\{x_i=0\mid1\leq i\leq n\}\cup\{x_i=a^kx_j\mid 1\leq i\neq j\leq n,1\leq k\leq m\}$ for $a\in\mathbb{N}_{>1}$. Then,
$\chi_{\mathcal{A}_n}(t)=(t-1)\prod^{n-1}_{j=1}(t-1-mn-j).$
\end{Proposition}
\begin{proof}
This is done directly by plugging ${a_1=a,\, a_2=a^2,\, ...,\, a_k=a^k}$ in Theorem \ref{thm51} and applying \cite[Thm 5.1]{athanasiadis}.
\end{proof}

\begin{Proposition}
Let $\mathcal{A}_n=\mathcal{B}_n\cup\{x_i=0\mid1\leq i\leq n\}\cup\{x_i =ax_j\mid 1\leq i<j\leq n\}$ for $a\in\mathbb{N}_{>1}$. Then $\chi_{\mathcal{A}_n}(t)=(t-1)(t-1-n)^{n-1}.$
\end{Proposition}
\begin{proof}
By plugging in $a_1=a$ and replacing all $i\neq j$ by $i< j$ in Theorem \ref{thm51}, and using the aharacteristic poluynomial of Shi arrangement ($\chi_{\mathcal{S}_n}(t)=t(t-n)^{n-1}$) in \cite[pp.64]{stanley1}, 
we have: $\chi_{\mathcal{A}_n}(t)=(t-1)(t-1-n)^{n-1}.$
\end{proof}

When $a_1,a_2,\ldots,a_m$ are multiplicatively independent positive integers, hyperplanes $x_i-x_j=\log a_k/\log a_1, 1\leq i\neq j\leq n,1\leq k\leq m$ are linearly independent, so Theorem \ref{thm51} alternatively proves Theorem \ref{thm31}. 
\subsection*{Acknowledgements} The authors would like to thank Prof. Richard P. Stanley for introducing this project to us and his useful suggestions. They would also thank MIT PRIMES program for supporting this research.

\bibliographystyle{unsrt}
\bibliography{mybib}

\end{document}